\newcommand{\on}[1]{\operatorname{#1}}
\newcommand{\R}{\mathbb{R}}
\newcommand{\gt}{\tilde{g}}
\newcommand{\F}{\tilde{\mathcal{F}}}
\newtheorem{proposition}{Proposition}
\newtheorem{corollary}{Corollary}
\newtheorem{lemma}{Lemma}
\newtheorem{remark}{Remark}
\newtheorem{fact}{Fact}
\title[Isoperimetric Inequalities For Minimal Submanifolds]{ISOPERIMETRIC INEQUALITIES FOR MINIMAL SUBMANIFOLDS IN RIEMANNIAN MANIFOLDS: \\A COUNTEREXAMPLE IN HIGHER CODIMENSION}
\author{ Victor Bangert and Nena R\"ottgen}        
\address{Mathematisches Institut, Abteilung f\"ur Reine Mathematik, Albert-Ludwigs-Universit\"at, Eckerstr. 1, 79104 Freiburg im Breisgau, Germany}               
\date{December 23, 2010}                                     
\subjclass[2000]{49Q20, 53C22, 49Q05, 53C42} 
\keywords{Riemannian manifolds, geodesics, isoperimetric inequalities, stationary varifolds}
\begin{document}

\maketitle

\begin{abstract} \hfill \newline
For compact Riemannian manifolds with convex boundary, B.~White proved the following alternative: Either there is an isoperimetric inequality for minimal hypersurfaces or there exists a closed minimal hypersurface, possibly with a small singular set. There is the natural question if a similar result is true for submanifolds of higher codimension. Specifically, B.~White asked if the non--existence of an isoperimetric inequality for $k$-varifolds implies the existence of a nonzero, stationary, integral $k$-varifold. We present examples showing that this is not true in codimension greater than two. The key step is the construction of a Riemannian metric on the closed four--dimensional ball $B^4$ with the following properties: (1) $B^4$ has strictly convex boundary. (2) There exists a complete nonconstant geodesic $c:\mathbb{R} \to B^4$. (3) There does not exist a closed geodesic in $B^4$.
\end{abstract}

\section{Introduction}

If $D$ is a two-dimensional Riemannian disc with locally convex boundary $\partial D$ and if there is no closed geodesic in $D$, then there is a constant $C>0$ such that every geodesic segment in $D$ has length at most $C$. An equivalent formulation of this fact is: If there exists a nonconstant geodesic $c:\mathbb{R} \to D$, then $D$ contains a closed geodesic. This fact is due to Birkhoff, cf. \cite[ VI. 10]{Birkhoff}, and played a role in the proof that there exist infinitely many closed geodesics on every Riemannian $2$-sphere, cf. \cite{Bangert} and \cite{Franks}. 

In arbitrary dimensions, an analogous result has been proven by B.\ White \cite[Theorem 2.1]{White} in the codimension one situation, i.e. when geodesics are replaced by minimal hypersurfaces. As part of the opening colloquium of the collaborative research center SFB/Transregio 71 in Freiburg, April 2009, B.\ White lectured on this result, and posed the question if there could be a version of the result that is not restricted to the codimension one case, see also \cite[Remark 2.8]{White}. 

Here, we construct a Riemannian metric $g$ on the closed four-dimensional ball $B^4$ such that $\partial B^4$ is strictly convex and such that $B^4$ carries a complete geodesic, but no closed geodesic. Actually one would expect that such an example exists already on the closed $3$--ball. We believe that this is the case, but our construction would be considerably more complicated.
\newpage
Now we explain how this can be used to answer B.~White's question \cite[Remark 2.8]{White}, that explicitly asks:

\vspace{1ex}
\noindent\emph{Let $N$ be a compact, $k$-convex Riemannian manifold containing a nonzero, stationary $k$-varifold. Does this imply that $N$ contains an \textbf{integral} stationary $k$-varifold?}

\vspace{1ex}
\noindent For more details on this question see Section \ref{sec:appl}. 

Taking the Riemannian product of an arbitrary closed Riemannian manifold $M$ with our example $(B^4,g)$ we obtain a compact manifold $\tilde{M}$ of dimension $\tilde{m}\geq 4$. This $\tilde{M}$ has convex boundary. So $\tilde{M}$ is $k$-konvex for every $k < \tilde{m}$. The product of $M$ with a complete geodesic in $B^4$ gives an $(\tilde{m}-3)$-dimensional, immersed, totally geodesic submanifold isometric to $M \times \mathbb{R}$. This implies that there is no isoperimetric inequality for $(\tilde{m}-3)$-dimensional minimal submanifolds in $\tilde{M}$. Hence, from \cite[Theorem 2.3]{White} we know that there exists a nonzero, stationary $(\tilde{m}-3)$-varifold in $\tilde{M}$. Indeed, we can describe explicitly such a varifold $V_0$ in $M$, and prove that -- up to scale -- $V_0$ is the only stationary $(\tilde{m}-3)$-varifold in $\tilde{M}$.

From the explicit description of $V_0$ we conclude that $V_0$ is not rectifiable and, hence, not integral. This gives a negative answer to B.~White's question for the case of varifolds of arbitrary dimension, and codimension at least three.

Finally we sketch the idea underlying the construction of the metric $g$ on $B^4$. First, we deform the standard metric $g_0$ on the ball $B^4 \subset \mathbb{R}^4$ of radius $2$ so that all the spheres $S^3(\rho) \subset B^4$ of radius $\rho \in ]0,2[$ remain strictly convex, except for $S^3(1)$ whose second fundamental form vanishes precisely on the vectors tangent to an irrational geodesic foliation $\mathcal{F}$ of the Clifford torus $\mathbb{T}^2 \subset S^3(1)$. This implies that there are no closed geodesics in $B^4$ with respect to this metric. Moreover, we achieve that also the second fundamental form of the Clifford torus $\mathbb{T}^2$ vanishes in the direction of $\mathcal{F}$. Then the leaves of $\mathcal{F}$ are complete geodesics not only in $\mathbb{T}^2$ but also with respect to the metric on $B^4$. 

\tableofcontents
\section{Convex Distance Functions} \label{sec:convex}

In this section we will recall some well known facts about geodesics and distance functions. Let $(M,g)$ denote a Riemannian manifold and $i:N \hookrightarrow M$ a submanifold. We will denote the induced metric on $N$ by $g^N$. Then a curve $c:I \subset \mathbb{R} \to N$ is a $g$-geodesic if and only if $c$ is a $g^N$-geodesic and the second fundamental form of $N$ vanishes on its tangent vectors. 

Now, let $F:N\times (-\varepsilon,\varepsilon) \to M$ be a normal variation with variational vector field $V= \frac{\partial F}{\partial t}_{\vert t=0}$ along $i=F(\,\cdot\,,0)$. Then, for any tangent vectors $v_1$, $v_2 \in T_pN$, one calculates, cf. \cite[(1.33)]{Colding},
\begin{equation}
  \label{eq:3}
\frac{d}{d t}_{\vert t=0} (F_t^*g)(v_{1},v_{2})=  g(\nabla_{v_{1}} V, v_{2})+ g(v_{1},\nabla_{v_{2}} V),  
\end{equation}
where $F_t:N \to M$ is defined by $F_t(\,\cdot\,)= F(\,\cdot\,,t)$.

If, additionally, $|V|=1$, it follows from equation \eqref{eq:3}, that the second fundamental form $h^N(\cdot,\cdot)$ of $N$ with respect to $V$ is given by 
 \begin{equation}
  \label{eq:2}
 h^N(v_1,v_2)= - \frac{1}{2} \,\frac{d}{d t}_{\vert t=0} (F_t^*g)(v_{1},v_{2}). 
\end{equation}

We will use this fact in the special case where $N$ is a level set of a $C^\infty$-function $d$ with $|\operatorname{grad} d\,|=1$. These functions will be called distance functions, cf. \cite[2.3.1]{Petersen}. Then the restriction of the gradient flow $\Phi_t$ to $N$ is a normal variation with variational vector field $V=\operatorname{grad} d$. The gradient of $d$ is contained in the null space of the Hessian $\nabla^2 d$  and for any $v_1$, $v_2 \in T_pN$ one obtains

\begin{equation} \label{eq:1}
 \nabla^2 d (v_1,v_2)= g(\nabla_{v_{1}} V, v_{2})= - h^N(v_1,v_2). 
\end{equation}

Hence a distance function is a convex function if the second fundamental form (with respect to $\operatorname{grad} d$) of any of its level sets is everywhere negative semidefinite. Recall that a $C^2$-function $f:M\to \mathbb{R}$ is convex if one of the following equivalent conditions is satisfied:
\begin{itemize} 
\item[\labelitemiv] For any geodesic segment $c:I \to M$ the composition $f \circ c:I \to \mathbb{R}$ is convex.
\item[\labelitemiv] The Hessian $\nabla^2 f$ is everywhere positive semidefinite.
\end{itemize}
In particular, we have:

\begin{fact}\label{fact:konvex} Let $f:M \to \mathbb{R}$ be a convex function. Then any closed geodesic in $M$ is contained in one of the level sets of $f$. If the second fundamental form of a smooth level set of $f$ is definite at some point, then there is no closed geodesic passing through this point. \hfill $\qed$
\end{fact}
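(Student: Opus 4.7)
The plan is to reduce both claims to elementary facts about convex functions on $\mathbb{R}$, combined with the tangency criterion for geodesics in submanifolds recalled at the beginning of Section \ref{sec:convex}.

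For the first assertion, let $c:\mathbb{R} \to M$ be a closed geodesic of period $L>0$. By the very definition of convexity of $f$ (composition with geodesic segments), $f \circ c: \mathbb{R} \to \mathbb{R}$ is convex. It is also $L$-periodic since $c$ is, hence bounded. A convex function on $\mathbb{R}$ which is bounded above must be constant: its one-sided derivatives are monotone non-decreasing, and if they were nonzero anywhere the function would blow up in at least one direction. Therefore $f \circ c \equiv f(c(0))$, so $c$ lies in a single level set of $f$.

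For the second assertion, let $N = f^{-1}(a)$ be a smooth level set and suppose that the second fundamental form $h^N$, taken with respect to the unit normal $\grad f/|\grad f|$ (well defined near $p$ since $N$ is smooth there), is definite at a point $p \in N$. If some closed geodesic $c$ of $M$ were to pass through $p$, then by the first part $c$ would be entirely contained in $f^{-1}(f(p)) = N$. The criterion from the opening of Section \ref{sec:convex}, characterizing $g$-geodesics lying in $N$ as $g^N$-geodesics whose tangent vectors annihilate $h^N$, then forces $h^N(\dot c(t_0),\dot c(t_0)) = 0$ at the parameter $t_0$ with $c(t_0) = p$. Since a geodesic has constant nonzero speed, $\dot c(t_0) \neq 0$, contradicting the definiteness of $h^N$ at $p$.

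The only non-formal ingredient is the observation that a bounded convex function on $\mathbb{R}$ is constant; no real obstacle is expected here, as the Fact is essentially a bookkeeping combination of the two equivalent characterizations of convex functions with the submanifold-geodesic criterion already established.
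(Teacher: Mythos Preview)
Your proof is correct. The paper gives no proof of this Fact at all---it is stated with a bare $\qed$ as an elementary consequence of the definitions and of the submanifold--geodesic criterion recalled just before it---and your argument is precisely the standard justification one would supply.
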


Therefore there are no closed geodesics on a manifold that is equipped with a convex distance function, if its Hessian restricted to the tangent spaces of the level sets is everywhere definite. 

\section{The Example}\label{example}

Consider the closed standard 4-ball $(B^4,g_0)$ with radius $2$ and the Clifford torus $(\mathbb{T}^2, g_0^{\mathbb{T}^2})$ given by $\{\nicefrac{1}{\sqrt{2}}(\sin \varphi, \cos \varphi, \sin \theta, \cos \theta) \mid \varphi, \theta \in [0,2 \pi]\}$. The Clifford torus is a flat torus that is isometrically embedded in the standard sphere $S^3 \subset B^4$. The map $(\varphi, \theta)\in \R^2\to \frac{1}{\sqrt{2}}(\sin \varphi, \cos \varphi, \sin \theta, \cos \theta)$ from euclidean $\R^2$ to the Clifford torus $\mathbb{T}^2$ is a homothetic covering map. The projection to $\mathbb{T}^2$ of a family of parallel lines in $\mathbb{R}^2$ will be called a geodesic foliation of $\mathbb{T}^2$. A geodesic foliation of $\mathbb{T}^2$ is called rational  if the corresponding family of parallels has rational slope and irrational otherwise. The geodesics of a rational foliation of $\mathbb{T}^2$ are all closed, while the geodesics of an irrational geodesic foliation of $\mathbb{T}^2$ are all dense on $\mathbb{T}^2$. 

The metric $g$ that we will define on $B^4$ will have the following properties:

\begin{enumerate}[(G1)]
\item \label{metrik1}The induced metric $g^{\mathbb{T}^2}$ on $\mathbb{T}^2$ is the flat one induced by $g_0$.
\item \label{metrik2}The function $d:B^4 \to [0,2]$ given by the euclidean distance to zero is a convex distance function with respect to the metric $g$.
\item \label{metrik3}There exists an irrational geodesic foliation $\mathcal{F}$ of $\mathbb{T}^2$ such that the following holds for the hessian $\nabla^2(d^2)$ with respect to $g$: $\nabla^2(d^2)\vert_x$ is positive definite for all $x \in B^4 \setminus \mathbb{T}^2$, and for $x \in \mathbb{T}^2$ the nullspace of $\nabla^2(d^2)\vert_x$ coincides with the tangent line to $\mathcal{F}$ at $x$.
\item \label{metrik4}The second fundamental form of the Clifford torus $\mathbb{T}^2$ as a submanifold of $S^3$ vanishes on the vectors tangent to the irrational geodesic foliation $\mathcal{F}$ of $\mathbb{T}^2\subset S^3$.
\end{enumerate} 

From (G\ref{metrik2}), (G\ref{metrik3}) and equation \eqref{eq:1} we conclude
\begin{enumerate}[(G3')]
\item \label{metrik3a}For any sphere ${S^3(\rho)}= d^{-1}(\rho)$, $\rho \in ]0,2] \setminus\{1\}$, the second fundamental form $h^{S^3(\rho)}$ with respect to $\operatorname{grad} d$ is negative definite, and on $ S^3= S^3(1)$ the zero directions of $h^{S^3}$ are precisely the vectors tangent to the irrational geodesic foliation $\mathcal{F}$.
\end{enumerate} 

Now we will prove
\begin{proposition}\label{prop}
  Suppose $g$ is a Riemannian metric on $B^4$ satisfying conditions (G\ref{metrik1})-(G\ref{metrik4}). Then there exists a complete (non-constant) $g$-geodesic $c:\mathbb{R} \to B^4$, but no closed $g$-geodesic in $B^4$. Moreover, $\partial B^4= S^3(2)$ is strictly convex.
\end{proposition}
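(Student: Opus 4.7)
The plan is to handle the three conclusions separately, using a different subset of the hypotheses and the tools of Section~\ref{sec:convex} for each.

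For the \emph{existence of a complete non-constant geodesic}, the natural candidates are the leaves of the foliation $\mathcal{F}$. Since $\mathcal{F}$ lifts to a family of parallel straight lines in $\mathbb{R}^2$, its leaves are geodesics of the flat universal cover, and by (G\ref{metrik1}) geodesics of $(\mathbb{T}^2, g^{\mathbb{T}^2})$. To promote such a leaf to a $g$-geodesic of $B^4$ I invoke the criterion from the beginning of Section~\ref{sec:convex} in two stages, applied to the nested submanifolds $\mathbb{T}^2 \subset S^3(1) \subset B^4$: what is needed is that the second fundamental forms of $\mathbb{T}^2$ in $S^3(1)$ and of $S^3(1)$ in $B^4$ vanish on vectors tangent to $\mathcal{F}$. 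The first is exactly (G\ref{metrik4}); the second is contained in (G3'), which identifies the null directions of $h^{S^3(1)}$ on $\mathbb{T}^2$ with the tangent line to $\mathcal{F}$. Any such leaf is defined on all of $\mathbb{R}$ and non-constant, which gives the required geodesic.

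For the \emph{non-existence of a closed geodesic}, the starting observation is that (G\ref{metrik3}) makes $d^2$ a convex function on $B^4$. By Fact~\ref{fact:konvex}, any closed $g$-geodesic must lie in a single level set of $d^2$, which is a sphere $S^3(\rho)$ for some $\rho \in\, ]0,2]$. If $\rho \ne 1$, (G3') gives that $h^{S^3(\rho)}$ is definite everywhere, so the second half of Fact~\ref{fact:konvex} immediately excludes this case. If $\rho = 1$, the same definiteness argument (using that $\nabla^2(d^2)$, and hence $h^{S^3(1)}$, is still definite on $S^3(1) \setminus \mathbb{T}^2$ by (G\ref{metrik3})) forces a hypothetical closed geodesic to be entirely contained in $\mathbb{T}^2$. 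On $\mathbb{T}^2$ its velocity must at each point lie in the null space of $h^{S^3(1)}$, i.e.\ be tangent to $\mathcal{F}$; hence the curve has to be a leaf of $\mathcal{F}$. But leaves of the irrational foliation are dense in $\mathbb{T}^2$ rather than closed, a contradiction.

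\emph{Strict convexity} of $\partial B^4 = S^3(2)$ follows directly from (G3'): the second fundamental form $h^{S^3(2)}$ with respect to the outward normal $\operatorname{grad} d$ is negative definite, which is the usual definition of strict convexity of the boundary. The step I expect to need the most care is the $\rho = 1$ case of the closed-geodesic argument---in particular, verifying that a closed geodesic in $S^3(1)$ cannot escape $\mathbb{T}^2$ and then using the tangency condition to identify it as a single leaf of $\mathcal{F}$ rather than some more exotic curve. Everything else is a direct application of Fact~\ref{fact:konvex} and equation~\eqref{eq:1}.
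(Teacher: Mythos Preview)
Your proof is correct and follows the same approach as the paper's; you have simply spelled out in more detail the steps that the paper compresses into one sentence (``Using properties (G\ref{metrik2}), (G\ref{metrik3}) and Fact~\ref{fact:konvex} we conclude that $c$ lies in the euclidean sphere $S^3$ and that $c$ is a leaf of the irrational foliation $\mathcal{F}$''). In particular, your case split $\rho\neq 1$ versus $\rho=1$, the passage from $S^3(1)$ down to $\mathbb{T}^2$, and the identification of $c$ as a leaf via the null directions of $h^{S^3(1)}$ are exactly what underlies the paper's terse argument; the only cosmetic difference is that the paper invokes the convexity of $d$ via (G\ref{metrik2}) rather than of $d^2$ via (G\ref{metrik3}), but the level sets and their second fundamental forms are the same.
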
 

\begin{proof}
  Note first that by conditions (G\ref{metrik3}') and (G\ref{metrik4}) the geodesics of the irrational foliation are complete $g$-geodesics contained in $\mathbb{T}^2 \subset B^4$, cf.~the discussion at the beginning of Section \ref{sec:convex}. Next we will show that there are no closed $g$-geodesics in $B^4$. So, let us assume that there exists a closed $g$-geodesic $c: S^1 \to B^4$. Using properties (G\ref{metrik2}), (G\ref{metrik3}) and Fact~\ref{fact:konvex} we conclude that $c$ lies in the euclidean sphere $S^3$ and that $c$ is a leaf of the irrational foliation $\mathcal{F}$ of $\mathbb{T}^2\subset S^3$. This contradicts our assumption that $c$ is closed.
\end{proof}

Now we describe how one can construct a Riemannian metric $g$ on $B^4$ that satisfies properties (G\ref{metrik1})-(G\ref{metrik4}). We consider the coordinate system
\begin{align}
  \label{eq:karte}\nonumber
  F:&\,]0,2[\times \left]0,\pi/2\right[\times \mathbb{R}^2 \longrightarrow \quad B^4 \\
&\qquad (\rho,\,\psi,\,\varphi,\,\theta)\qquad\longmapsto \left(\begin{array}{ccc}
    \rho & \cos \psi & \sin \varphi\\
    \rho & \cos \psi & \cos \varphi\\
    \rho & \sin \psi & \sin \theta\\
    \rho & \sin \psi & \cos \theta\\
  \end{array}\right).
\end{align}
For $\rho=1$ and $\psi=\nicefrac{\pi}{4}$ the coordinates $\varphi$ and $\theta$ describe the Clifford torus, i.e. $\mathbb{T}^2= F(\{1\}\times \{\nicefrac{\pi}{4}\}\times \mathbb{R}^2)$. We denote the induced coordinate vectors on $\operatorname{im} (F):= F(]0,2[\times \left]0,\nicefrac{\pi}{2}\right[\times \mathbb{R}^2 )$ by $\partial_\rho$, $\partial_\psi$, $\partial_\varphi$, $\partial_\theta$. They form a $g_0$-orthogonal frame on $\operatorname{im} (F)$, and the metric $g_0$ is given in these coordinates by the diagonal matrix
\[\operatorname{diag}(1,\rho^2,\rho^2 \cos^2\psi,\rho^2\sin^2 \psi).\]
This shows, in particular, that $F\vert_{\{1\}\times \{\nicefrac{\pi}{4}\}\times \mathbb{R}^2}$ is - up to the constant factor $\frac{1}{\sqrt{2}}$ -  an isometric covering map with group of deck transformation $2 \pi \mathbb{Z} \times 2 \pi \mathbb{Z}$. For fixed $\alpha \in \mathbb{R}\setminus \mathbb{Q}$ we consider the vectorfield $Y=\partial_\varphi + \alpha\, \partial_\theta$. The restriction of $Y$ to the torus $\mathbb{T}^2$ is tangent to an irrational geodesic foliation and the vector field $Z:= \alpha \, \tan \psi \, \partial_\varphi - \cot\psi \,\partial_\theta$ completes $\partial_\rho$, $\partial_\psi$ and $Y$ to an orthogonal frame on $\operatorname{im} (F)$.
We define a new metric $g$ on $\operatorname{im} (F)$ by requiring that the vectorfields $\partial_\rho$, $\partial_\psi$, $Y$ and $Z$ are pairwise $g$-orthogonal and by setting for $x= F(\rho,\psi,\varphi,\theta)$
\begin{align*}
  \label{eq:neuemetrik}
  g(\,\partial_{\rho},\partial_{\rho}\,)_{\vert x} &=g_0(\partial_{\rho},\partial_{\rho})_{\vert x} = 1, \\
  g(\partial_\psi,\partial_\psi)_{\vert x} &= g_0(\partial_\psi,\partial_\psi)_{\vert x}= \rho^2,\\
  g(\,Y,Y\,)_{\vert x} &=R(\rho,\psi),\\
  g(\,Z,Z\,)_{\vert x} &=g_0(Z,Z)_{\vert x} =  \rho^2 (\cos^2 \psi + \alpha^2 \sin^2 \psi),
\end{align*}
where the function $R \in C^\infty( ]0,2[\times]0,\nicefrac{\pi}{2}[,\mathbb{R}^+)$ is chosen such that the following conditions are fulfilled: 

\begin{enumerate}[(R1)]
\item \label{R1} $R(\rho,\psi) = \rho^2(cos^2\psi + \alpha^2 \sin^2 \psi)$, \newline
 if $(\rho,\psi) = (1,\nicefrac{\pi}{4})$ or $(\rho, \psi)\in \left(]0,2[\, \times\, ]0,\nicefrac{\pi}{2}[\right)\setminus \left([\nicefrac{1}{2},\nicefrac{3}{2}]\times [\nicefrac{\pi}{8},\nicefrac{3\pi}{8}]\right)$
\item \label{R2} $\frac{\partial}{\partial\rho} R(\rho,\psi)> 0$ if $(\rho,\psi) \not= (1,\nicefrac{\pi}{4})$ 
\item \label{R3} $DR_{|(1,\nicefrac{\pi}{4})} = 0 $ 
\end{enumerate}

For completeness, we will construct such a function $R$ in the appendix. First note that condition (R\ref{R1}) ensures that $g$ coincides with the standard metric $g_0$ outside the tubular neighborhood of $\mathbb{T}^2$ given by the image of $[\nicefrac{1}{2},\nicefrac{3}{2}]\times [\nicefrac{\pi}{8},\nicefrac{3\pi}{8}] \times \mathbb{R}^2$ under $F$. Therefore, the standard metric extends $g$ to a smooth metric on all of $B^4$. 

\begin{proposition}
 The metric $g$ fulfills conditions (G\ref{metrik1})-(G\ref{metrik4}).  
\end{proposition}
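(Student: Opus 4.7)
The plan is to verify (G\ref{metrik1})--(G\ref{metrik4}) one at a time, exploiting that $g$ is defined via the $g$-orthogonal frame $\{\partial_\rho,\partial_\psi,Y,Z\}$ on $\im(F)$. A useful first observation is that $g(\partial_\rho,\partial_\rho)$, $g(\partial_\psi,\partial_\psi)$, and $g(Z,Z)$ agree with their $g_0$-values at every point, and that $Y=\partial_\varphi+\alpha\partial_\theta$ has constant coefficients so $[Y,\partial_\rho]=[Y,\partial_\psi]=0$; only $R(\rho,\psi)=g(Y,Y)$ carries nontrivial information beyond $g_0$, for which $g_0(Y,Y)=\rho^2(\cos^2\psi+\alpha^2\sin^2\psi)$.

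Conditions (G\ref{metrik1}) and (G\ref{metrik4}) then follow from (R\ref{R1}) and (R\ref{R3}) respectively. At $(\rho,\psi)=(1,\nicefrac{\pi}{4})$, condition (R\ref{R1}) forces $R=(1+\alpha^2)/2=g_0(Y,Y)$, so $g$ and $g_0$ have equal values on the $g$-orthogonal frame $\{Y,Z\}$ of $T\mathbb{T}^2$, yielding (G\ref{metrik1}). For (G\ref{metrik4}), the $g$-unit normal to $\mathbb{T}^2$ inside $S^3$ is $\partial_\psi/\rho=\partial_\psi$ along $\mathbb{T}^2$, and its flow shifts $\psi\mapsto\psi+t$ while fixing $Y$. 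Equation \eqref{eq:2} then gives
\[
h^{\mathbb{T}^2\subset S^3}(Y,Y) = -\tfrac{1}{2}\,\partial_\psi R(1,\nicefrac{\pi}{4}) = 0
\]
by (R\ref{R3}).

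The core of the argument is (G\ref{metrik2}) and (G\ref{metrik3}). Since $d=\rho$ in the coordinates and $|\partial_\rho|_g=1$, we have $\grad^g d=\partial_\rho$, so $d$ is a distance function in the sense of Section~\ref{sec:convex}, and \eqref{eq:1} identifies $\nabla^2 d$ on $T_pS^3(\rho)$ with $-h^{S^3(\rho)}$ (with $\grad^g d$ in its nullspace). The flow of $\partial_\rho$ is $g$-unit-speed and leaves $\partial_\psi,Y,Z$ invariant, so \eqref{eq:2} expresses $h^{S^3(\rho)}$ in diagonal form in the orthogonal basis $\{\partial_\psi,Y,Z\}$, with entries
\[
-\rho,\quad -\tfrac{1}{2}\,\partial_\rho R(\rho,\psi),\quad -\rho(\cos^2\psi+\alpha^2\sin^2\psi).
\]
By (R\ref{R2}) and (R\ref{R3}) all three are strictly negative except the middle one, which vanishes precisely at $(1,\nicefrac{\pi}{4})$. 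Combining with the identity $\nabla^2(d^2)=2\,dd\otimes dd+2d\,\nabla^2 d$ yields a block-diagonal form for $\nabla^2(d^2)$ whose $\partial_\rho$-entry equals $+2$ and whose tangential block equals $-2\rho\,h^{S^3(\rho)}$. This is positive definite on $B^4\setminus\mathbb{T}^2$ and, on $\mathbb{T}^2$, has one-dimensional nullspace spanned by $Y$, which is by construction tangent to $\mathcal{F}$; this proves (G\ref{metrik2}) and (G\ref{metrik3}). Outside the tubular neighborhood specified in (R\ref{R1}), where $g=g_0$, the same diagonal calculation reduces to the standard euclidean one.

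I do not expect a single hard step: the verification is routine once the diagonal structure is noticed. The points requiring care are (i) justifying the $\partial_\rho$-invariance of the frame $\{\partial_\psi,Y,Z\}$, so that \eqref{eq:2} applies with no correction terms and the off-diagonal entries of $h^{S^3(\rho)}$ vanish automatically from the $g$-orthogonality of the frame, and (ii) observing that the conditions (R\ref{R1})--(R\ref{R3}) are calibrated precisely so that the sole potentially degenerate direction -- the $Y$-direction of $h^{S^3(\rho)}$ -- vanishes exactly at the Clifford torus.
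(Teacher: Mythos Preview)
Your proof is correct and follows essentially the same route as the paper's: you identify $\grad^g d=\partial_\rho$, exploit that $\partial_\rho$ commutes with the frame $\{\partial_\psi,Y,Z\}$ to diagonalize $\nabla^2 d$ via \eqref{eq:2}--\eqref{eq:1}, and read off (G\ref{metrik2})--(G\ref{metrik3}) from (R\ref{R2})--(R\ref{R3}); for (G\ref{metrik4}) you use $\partial_\psi$ as the unit normal and again \eqref{eq:2} together with $[Y,\partial_\psi]=0$ and (R\ref{R3}). The only cosmetic difference is that you spell out the identity $\nabla^2(d^2)=2\,dd\otimes dd+2d\,\nabla^2 d$ to pass from $\nabla^2 d$ to $\nabla^2(d^2)$, whereas the paper leaves this step implicit.
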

\begin{proof}
First note that condition (G\ref{metrik1}) follows from condition (R\ref{R1}). Next, our definition of $g$ directly implies that $\partial_\rho$ is the $g$-gradient of the euclidean distance $d$ from zero. Hence $d$ is a distance function also with respect to $g$. Thus, by the discussion in Section \ref{sec:convex}, we can calculate its $g$-Hessian on $\operatorname{im}(F)$ with equations \eqref{eq:2} and \eqref{eq:1}. As $\operatorname{grad} \, d\vert_{\operatorname{im}(F)}= \partial_\rho$ commutes with $\partial_\psi$, $Y$ and $Z$, we obtain for $V,W \in \{ \partial_{\psi}, Y, Z\}$
\begin{equation*}
  \label{eq:Hesse2}
  \frac{d}{dt}_{|t=t_0}(\Phi_t^* g)_{|p} (V,W)=  \frac{d}{dt}_{|t=t_0}g_{\Phi_t(p)}(V_{|\Phi_t(p)}, W_{|\Phi_t(p)}),
\end{equation*}
where $\Phi_t$ denotes the gradient flow of $d$. Remember that on $\on{im}(F)$ the flow lines of $\Phi$ are the $\rho$-coordinate lines. Using the preceding equation and equations \eqref{eq:2} and \eqref{eq:1} we see that on $\operatorname{im} (F)$ the matrix of the $g$-Hessian of $d$ with respect to the frame $\partial_\rho$, $\partial_\psi$, $Y$, $Z$ is the diagonal matrix given by 
\begin{equation}
  \label{eq:hessian}
  \operatorname{diag}\left(0,\rho, \frac{1}{2} \frac{\partial}{\partial \rho} R(\rho,\psi), \rho (\cos^2 \psi + \alpha^2 \sin^2 \psi)\right).
\end{equation}
Now, condition (G\ref{metrik3}) follows immediately from (R\ref{R2}) and (R\ref{R3}). Since the metric coincides with the standard metric in a neighborhood of $B^4 \setminus \operatorname{im}(F)$ and the Hessian of $d$ is positive semidefinite on $\operatorname{im}(F)$, the function $d$ is convex everywhere. So, also condition (G\ref{metrik2}) is proven. Finally, to prove (G\ref{metrik4}), we consider the projection $\pi_\psi:S^3\cap \operatorname{im} (F)\to ]0, \nicefrac{\pi}{2}[$, \, $F(1,\psi,\varphi,\theta)\mapsto \psi$. This provides a distance function with gradient $\partial_\psi$ whose gradient flowlines are given by the coordinate lines of $\psi$. Now we calculate the second fundamental form $h^{\mathbb{T}^2}$ of $\mathbb{T}^2$ in $S^3$ with respect to $\partial_\psi$, using equation \eqref{eq:2}. Then $[Y,\partial_\psi]=0$ and condition (R\ref{R3}) imply:
\begin{equation*}
  h^{\mathbb{T}^2} (Y,Y) = -\frac{1}{2}\frac{\partial}{\partial \psi} R\, (1,\nicefrac{\pi}{4}) = 0.
\end{equation*}
This completes the proof.
\end{proof}

\begin{remark}
  The construction above can easily be generalized to balls $B$ of dimension $n\geq 5$. The construction yields a Riemannian metric on $B$ fulfilling properties (G\ref{metrik1})-(G\ref{metrik4}) with the obvious modifications of the dimension.
\end{remark}

\section{An answer to a question by Brian White}
\label{sec:appl}

As mentioned in the introduction, our example is related to isoperimetric inequalities in Riemannian manifolds. Brian White \cite{White} showed that an isoperimetric inequality holds for minimal hypersurfaces (or -more generally- for codimension one varifolds) in a compact, connected Riemannian manifold $\tilde{M}$ with mean-convex boundary if $\operatorname{dim}(\tilde{M})< 7$ and if there does not exist a smooth, closed, embedded minimal hypersurface $N \subset\tilde{M}$ (The same conclusion is true if $\operatorname{dim} (\tilde{M})\geq 7$, provided one replaces ``smooth'' by ``smooth except for a singular set of Hausdorff dimension at most $\operatorname{dim} (\tilde{M}) -7$''). 
 
An isoperimetric inequality in higher codimension is obtained in \cite[Theorem 2.3]{White} under the stronger condition, that there does not exist any nonzero, stationary varifold of the same codimension. In this context, Brian White asks (cf. \cite[Remark 2.8]{White}), whether the existence of a nonzero, stationary $k$-varifold in a compact, $k$-convex Riemannian manifold $N$ implies the existence of a nonzero, stationary, integral $k$-varifold in $N$. For a brief introduction to varifolds on Riemannian manifolds see \cite[Appendix]{White}.

In the following Proposition we answer this question in the negative for codimesion larger than $2$. Starting with an arbitrary closed, connected, $m$-dimensional Riemannian manifold $(M,g')$ we consider the product metric $\gt= g'\oplus g$ on $\tilde{M}=M\times B$, where $B$ is a closed ball of dimension $n\geq 4$ and $g$ a Riemannian metric on $B$ fullfilling (G\ref{metrik1})-(G\ref{metrik4}), cf. Section \ref{example}. Then $\partial \tilde{M}= M \times \partial B$ has the following convexity property. The second fundamental form of $\partial \tilde{M}$ with respect to the inward pointing unit normal is positive semi-definite, and its kernel consists of the vectors tangent to the factor $M$. In Proposition \ref{prop:unique} we will show that $(\tilde{M},\gt)$ contains a unique stationary, $(m+1)$-dimensional varifold $V_0$ of unit mass, and, in Fact \ref{fct:notrect}, that $V_0$ is not rectifiable and, hence, not integral. This provides a negative answer to the question posed in \cite[Remark 2.8]{White}. It is easy to see that $(\tilde{M},\gt)$ does not either admit an isoperimetric inequality for $(m+1)$-dimensional submanifolds with boundary: Denoting, as before, by $c:\R \to \mathbb{T}^2 \subset B$ a $g$-geodesic that is dense on the Clifford torus $\mathbb{T}^2$, we consider the totally geodesic submanifolds $M_n=M\times c([-n,n])$ in $\tilde{M}$. They satisfy $\on{vol}_{m+1}(M_n)=2n \on{vol}_m(M)$, while $\on{vol}_m(\partial M_n)=2\on{vol}_m(M)$. This contradicts the existence of an isoperimetric inequality for $(m+1)$-dimensional minimal submanifolds (with boundary) in $\tilde{M}$.  
\begin{remark}
  According to B. White's proof of \cite[Theorem 2.3]{White} any limit of the varifolds induced by the $M_n$, normalized so as to have mass one, is a non-zero, stationary, $(m+1)$-dimensional varifold. It is easy to see (and follows from Proposition \ref{prop:unique}) that in our case there is a unique limit varifold and that this is equal to $V_0$.
\end{remark}

Next we describe the $(m+1)$-varifold $V_0$ in $\tilde{M}$: A general $(m+1)$-varifold in $\tilde{M}$ is a finite Borel measure on the total space of the Grassmann bundle $\pi:G_{m+1}(\tilde{M}) \to \tilde{M}$. The support of $V_0$ is the subset $\F$ of $G_{m+1}(\tilde{M})$ given by 
 \begin{equation*}
    \F= \{T_pM\times T_q\mathcal{F} \mid (p,q) \in  M\times \mathbb{T}^2 \},
  \end{equation*}
where $\mathcal{F}$ is the foliation of the Clifford torus $\mathbb{T}^2 \subset B$ defined in (G\ref{metrik3}). In particular, $\pi\vert_{\F}$ is one-to-one. Now $V_0$ is the pushforward of the normalized Riemannian volume of $M \times \mathbb{T}^2$, i.e. $V_0= (\pi\vert_{\F}^{-1})_\# \on{vol}_{M \times \mathbb{T}^2}$. 

In particular, the weight measure $\mu_{V_0}$ of $V_0$ is the normalized Riemannian volume $\on{vol}_{M\times \mathbb{T}^2}$ of the $(m+2)$-dimensional submanifold $M \times \mathbb{T}^2$. This implies that the $(m+1)$-density of $\mu_{V_0}$ is identically zero. 

For rectifiable $(m+1)$-varifolds $V$ the weight measure $\mu_V$ has an approximate tangent space for $\mu_V$ almost every point and hence its $(m+1)$-density is positive $\mu_V$-almost everywhere, cf. \cite[\S 15]{Simon}. Since the $(m+1)$-density of $\mu_{V_0}$ vanishes, we conclude 

\begin{fact}\label{fct:notrect}
  The $(m+1)$-varifold $V_0$ is not rectifiable. 
\end{fact}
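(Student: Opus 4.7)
The plan is to show that the $(m+1)$-density of the weight measure $\mu_{V_0}$ vanishes identically on $\tilde{M}$, and then invoke the cited standard fact from \cite[\S 15]{Simon} that a rectifiable $(m+1)$-varifold has positive $(m+1)$-density $\mu_V$-almost everywhere. Since $\mu_{V_0}$ has total mass one, the identity $\Theta^{m+1}(\mu_{V_0},\cdot)\equiv 0$ will preclude rectifiability.

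First I would unpack the definition to confirm that $\mu_{V_0}$ is, up to the normalization constant, the intrinsic Riemannian volume measure of the smooth, compact, embedded $(m+2)$-dimensional submanifold $M\times\mathbb{T}^2\subset\tilde{M}$. Indeed, $\pi|_\F$ is a bijection onto $M\times\mathbb{T}^2$ and $V_0=(\pi|_\F^{-1})_\#\on{vol}_{M\times\mathbb{T}^2}$ (normalized to mass one), so pushing forward by $\pi$ yields $\mu_{V_0}(A)=c\,\on{vol}_{M\times\mathbb{T}^2}(A\cap(M\times\mathbb{T}^2))$ for every Borel set $A\subset\tilde{M}$, where $c$ is the normalizing constant.

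Next I would compute the density $\Theta^{m+1}(\mu_{V_0},x)=\lim_{r\to 0}\mu_{V_0}(B_r(x))/(\omega_{m+1}r^{m+1})$ pointwise. If $x\notin M\times\mathbb{T}^2$, then sufficiently small balls around $x$ miss the support of $\mu_{V_0}$, so the density is zero. If $x\in M\times\mathbb{T}^2$, the standard Riemannian local volume estimate for a smooth embedded submanifold gives $\mu_{V_0}(B_r(x))\le C\,r^{m+2}$ for all small $r$, whence $\mu_{V_0}(B_r(x))/r^{m+1}=O(r)\to 0$. Hence $\Theta^{m+1}(\mu_{V_0},\cdot)\equiv 0$, contradicting what rectifiability would require. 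I do not anticipate any substantive obstacle; the only mildly technical point is the uniform local volume bound on $M\times\mathbb{T}^2$, which is routine for a compact embedded submanifold.
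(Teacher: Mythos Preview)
Your argument is correct and follows essentially the same route as the paper: the paper observes that $\mu_{V_0}$ is the normalized Riemannian volume of the $(m+2)$-dimensional submanifold $M\times\mathbb{T}^2$, notes that this forces the $(m+1)$-density of $\mu_{V_0}$ to vanish identically, and then invokes \cite[\S 15]{Simon} to conclude non-rectifiability. You have simply spelled out the density computation (via the $O(r^{m+2})$ volume bound) that the paper leaves implicit.
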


 Here is the main result of this section. 

 \begin{proposition}\label{prop:unique}
   Let $(\tilde{M},\gt)$ and $V_0$ be as above. Then $V_0$ is stationary, and $V_0$ is the only stationary $(m+1)$-varifold of mass one in $(\tilde{M},\gt)$. 
 \end{proposition}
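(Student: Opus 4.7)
My plan is to prove this in two parts: first verify that $V_0$ is stationary by a direct product--Fubini calculation, then establish uniqueness by localizing $\on{supp}V$ to $\F$ using the convex function $d^2$ and then pinning down the weight measure via unique ergodicity.

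\emph{Stationarity of $V_0$.} At each $(p,q)\in M\times\mathbb{T}^2$ the frame $\{e_1,\dots,e_m,\tilde Y\}$---with $\{e_i\}$ an orthonormal frame of $(TM,g')$ and $\tilde Y:=Y/|Y|$ the $g$-unit vector tangent to $\mathcal{F}$---is a $\gt$-orthonormal basis of the tangent plane $T_pM\oplus T_q\mathcal{F}$. For a smooth test field $X=X_M+X_B$ on $\tilde M$, the product structure of $\gt$ together with (G\ref{metrik4}) (which, combined with $Y$ being a parallel, hence geodesic, vector field on the flat torus $\mathbb{T}^2$, yields $\nabla^B_{\tilde Y}\tilde Y=0$ along $\mathbb{T}^2$) gives
\[ \on{div}_S X=\on{div}^M X_M+\tilde Y\bigl(g(X_B,\tilde Y)\bigr). \]
Integrating against $\on{vol}_{M\times\mathbb{T}^2}$ and applying Fubini, each summand vanishes by the divergence theorem on a closed manifold: the first on every slice $M\times\{q\}$, and the second on every slice $\{p\}\times\mathbb{T}^2$, because by (G\ref{metrik1}) the field $\tilde Y$ has constant coefficients in the flat coordinates $(\varphi,\theta)$ and is therefore $\on{vol}_{\mathbb{T}^2}$-divergence-free. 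Hence $\delta V_0(X)=0$.

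\emph{Support localization.} Let $V$ be any stationary $(m+1)$-varifold in $(\tilde M,\gt)$ of mass $1$. Consider the convex function $f:=d^2\circ\on{pr}_B$ on $\tilde M$: by (G\ref{metrik2})--(G\ref{metrik3}) and the product structure, $\nabla^2 f\ge 0$ with nullspace $T_pM$ at $(p,q)$ when $q\notin\mathbb{T}^2$, and $T_pM\oplus T_q\mathcal{F}$ when $q\in\mathbb{T}^2$. Using strict convexity of $\partial\tilde M=M\times S^3(2)$ transverse to $TM$ (from (G\ref{metrik3}') at $\rho=2$), combined with the dimensional observation that an $(m+1)$-plane cannot fit in the $m$-dimensional $TM$, a standard maximum-principle argument forces $\on{supp} V\subset\on{int}(\tilde M)$. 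A cutoff approximation then gives $\int\on{tr}_S\nabla^2 f\,dV=\delta V(\on{grad} f)=0$, and non-negativity of the integrand forces $S\subset\ker\nabla^2 f$ for $V$-a.e.\ $(x,S)$. The nullspace analysis then yields $\on{supp} V\subset\F$, so that $V=(\pi|_\F^{-1})_\#\mu$ for a mass-$1$ Borel measure $\mu$ on $M\times\mathbb{T}^2$.

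\emph{Identifying the weight.} Two families of test fields now suffice to determine $\mu$. First, for $X:=\phi Y$ with $\phi\in C^\infty(\tilde M)$ and $Y$ extended smoothly with compact support in a tubular neighborhood of $\mathbb{T}^2$, the identity from the stationarity paragraph gives $\on{div}_S X=Y(\phi)$ on $M\times\mathbb{T}^2$; stationarity yields $\int Y(\phi)\,d\mu=0$ for every $\phi$, so $\mu$ is invariant under the product flow $\on{id}_M\times\Phi^Y_t$. Since $\Phi^Y_t$ is an irrational linear flow on the flat torus, it is uniquely ergodic, and disintegrating $\mu$ over the projection $M\times\mathbb{T}^2\to M$ produces $\mu=\mu_M\otimes\on{vol}_{\mathbb{T}^2}$ with $\mu_M$ the $M$-marginal. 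Second, for $X=U$ pulled back from a smooth vector field $U$ on $M$, the product metric gives $\on{div}_S X=\on{div}^M U(p)$, whence $\int_M\on{div}^M U\,d\mu_M=0$; specializing to $U=\on{grad}\phi$ yields $\int\Delta\phi\,d\mu_M=0$ for every $\phi\in C^\infty(M)$, so elliptic regularity and the maximum principle on the closed, connected manifold $M$ force $\mu_M$ to be a constant multiple of $\on{vol}_M$. The unit-mass normalization then identifies $V=V_0$. The boundary-exclusion step in the support localization is where I anticipate the main technical difficulty, as there the specific convexity geometry of $\partial\tilde M$ must be carefully combined with the dimensional inequality $m+1>\dim M$.
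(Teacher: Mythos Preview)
Your proof is correct and follows essentially the same three-step approach as the paper: direct verification of stationarity via Fubini, support localization by testing with $\on{grad}(d^2\circ\pi_2)$ and using nonnegativity of the Hessian, and identification of the weight measure via unique ergodicity of the irrational flow on $\mathbb{T}^2$ combined with constancy on $M$. The only differences are cosmetic---you reverse the order of the two sub-arguments in the weight-identification step, you invoke Weyl's lemma where the paper cites the Constancy Theorem for top-dimensional varifolds, and you add an explicit boundary-exclusion step that the paper omits (it simply writes $0=\delta V(\on{grad} f)$ for the ambient test field without further comment).
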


 \begin{remark}
   Statement and proof of Proposition \ref{prop:unique} include the case $\on{dim}(M)=m=0$. In this case the only stationary, unit mass $1$-varifold in $B$ is the stationary, non-rectifiable $1$-varifold $V_0$ with support on the tangent vectors to the irrational geodesic foliation $\mathcal{F}$ of $\mathbb{T}^2$ (see the description of $V_0$ above).  
 \end{remark}

We first recall the following well known fact from ergodic theory:

\begin{fact}(cf. \cite[p. 69]{Cornfeld})\label{fkt:flow}
  Suppose ${T_t}$ is the one-parameter group of translations on the standard torus $\mathbb{R}^2/(2 \pi \mathbb{Z})^2$ given by $[(x_1,x_2)] \mapsto [(x_1+\alpha_1 t,x_2+\alpha_2 t)]$ with $\alpha_1$ and $\alpha_2$ rationally independent. Then the flow $T_t$ is uniquely ergodic, i.e. the Lebesgue measure $\mu$ on $\mathbb{R}^2/(2 \pi \mathbb{Z})^2$ is the -- up to scale -- unique $T_t$-invariant Borel measure on $\mathbb{R}^2/(2 \pi\mathbb{Z})^2$. 
\end{fact}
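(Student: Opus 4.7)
The plan is to prove unique ergodicity by Fourier analysis on $\mathbb{T}^2 = \mathbb{R}^2/(2\pi\mathbb{Z})^2$, which is the cleanest route and avoids the detour through Birkhoff averages. Let $\nu$ be any $T_t$-invariant Borel probability measure on $\mathbb{T}^2$, and let $\mu$ denote the normalized Lebesgue measure. The goal is to show $\nu = \mu$; unique ergodicity up to scale then follows by normalization.

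First I would introduce the Fourier coefficients
\begin{equation*}
  \hat{\nu}(k) = \int_{\mathbb{T}^2} e^{i\langle k, x\rangle}\, d\nu(x), \qquad k = (k_1,k_2) \in \mathbb{Z}^2,
\end{equation*}
where $\langle k, x\rangle = k_1 x_1 + k_2 x_2$. The normalization $\nu(\mathbb{T}^2)=1$ gives $\hat\nu(0)=1$, matching $\hat\mu(0)=1$. The key computation is to exploit $T_t$-invariance: for every $t\in\mathbb{R}$ and every $k\in\mathbb{Z}^2$,
\begin{equation*}
  \hat\nu(k) = \int_{\mathbb{T}^2} e^{i\langle k, T_t x\rangle}\, d\nu(x) = e^{it\langle k,\alpha\rangle}\, \hat\nu(k),
\end{equation*}
where $\alpha = (\alpha_1,\alpha_2)$. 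Hence $(1-e^{it\langle k,\alpha\rangle})\hat\nu(k)=0$ for all $t$.

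Next I would invoke rational independence: for $k\neq 0$ the hypothesis that $\alpha_1,\alpha_2$ are rationally independent forces $\langle k,\alpha\rangle = k_1\alpha_1 + k_2\alpha_2 \neq 0$. (If $\langle k,\alpha\rangle=0$ with $k\neq 0$, then at least one $k_i\neq 0$ and one obtains a nontrivial rational relation $\alpha_1/\alpha_2 = -k_2/k_1 \in \mathbb{Q}$, or one of the $\alpha_i$ equals zero, contradicting rational independence in either guise.) Choosing $t$ with $t\langle k,\alpha\rangle\notin 2\pi\mathbb{Z}$ makes $e^{it\langle k,\alpha\rangle}\neq 1$, so $\hat\nu(k)=0$ for every $k\neq 0$. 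Thus $\hat\nu(k) = \hat\mu(k)$ for all $k\in\mathbb{Z}^2$.

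Finally I would conclude $\nu = \mu$ using standard density: trigonometric polynomials $\{x\mapsto e^{i\langle k,x\rangle}: k\in\mathbb{Z}^2\}$ are dense in $C(\mathbb{T}^2)$ by Stone--Weierstrass, so $\int f\, d\nu = \int f\, d\mu$ for every continuous $f$; by the Riesz representation theorem, this uniquely determines the finite Borel measure, hence $\nu=\mu$. I do not expect a real obstacle here: the argument is a direct consequence of the characters of $\mathbb{T}^2$ being joint eigenfunctions of the translation flow with distinct eigenvalues whenever the translation vector is irrational. The only subtlety worth mentioning is the reading of ``rationally independent''; I would note at the outset that the above interpretation (no nontrivial integer relation $k_1\alpha_1+k_2\alpha_2 = 0$) is the one needed for the argument to run, and is in particular the standard one employed in Fact~\ref{fkt:flow}.
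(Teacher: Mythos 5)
Your argument is correct, and it is worth pointing out that the paper does not prove Fact~\ref{fkt:flow} at all: it is quoted as a standard result with a reference to \cite[p.~69]{Cornfeld}. The Fourier--coefficient computation you give is precisely the classical proof of unique ergodicity of the irrational linear flow: invariance yields $(1-e^{it\langle k,\alpha\rangle})\hat\nu(k)=0$ for all $t$, rational independence of $\alpha_1,\alpha_2$ gives $\langle k,\alpha\rangle\neq 0$ for every $k\in\mathbb{Z}^2\setminus\{0\}$, and Stone--Weierstrass together with the Riesz representation theorem then identifies $\nu$ with normalized Lebesgue measure. All of these steps are sound. The only point worth making explicit is the reduction from the statement as given (uniqueness up to scale among invariant \emph{Borel} measures) to the probability-measure case you treat: on the compact torus every finite Borel measure is Radon, the zero measure is trivially a multiple of Lebesgue, and any nonzero finite invariant measure can be normalized, so your normalization at the outset loses no generality. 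This is exactly the form in which the Fact is used later in the paper (in Corollary~\ref{cor:unique} and Step~3 of the proof of Proposition~\ref{prop:unique}), where the invariant measure $\mu_{V,2}$ is a finite Borel measure coming from a varifold.
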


\begin{corollary}\label{cor:unique}
   Let $\bar{Y}$ be the unit vector field on $\mathbb{T}^2$ tangent to $\mathcal{F}$, that is given by the normalisation of $Y\vert_{\mathbb{T}^2}$, cf. Section \ref{example}, and denote by $\varphi^{\bar{Y}}_t$ its flow. Then the Riemannian area  $\on{vol}_{\mathbb{T}^2}$ is the -- up to scale -- unique Borel measure on $\mathbb{T}^2$ that is invariant under $\varphi^{\bar{Y}}_t$. 
\end{corollary}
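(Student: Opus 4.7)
The plan is to transport the dynamics on $\mathbb{T}^2$ back to the standard torus $\mathbb{R}^2/(2\pi\mathbb{Z})^2$ using the covering map from Section \ref{example}, and then invoke Fact \ref{fkt:flow} directly.

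First I would observe that the homothetic covering $F|_{\{1\}\times\{\pi/4\}\times \mathbb{R}^2}$ descends to a (scaled) isometry $\Phi : \mathbb{R}^2/(2\pi\mathbb{Z})^2 \to \mathbb{T}^2$, once $\mathbb{R}^2/(2\pi\mathbb{Z})^2$ is equipped with the appropriate constant multiple of the standard flat metric. Under this $\Phi$, the vector field $Y|_{\mathbb{T}^2} = \partial_\varphi + \alpha \partial_\theta$ pulls back to the constant vector field $(1,\alpha)$ on $\mathbb{R}^2/(2\pi\mathbb{Z})^2$. Since $|Y|_g$ is constant on $\mathbb{T}^2$ by condition (R\ref{R1}), the unit normalization $\bar{Y}$ pulls back to a constant vector field of the form $(\alpha_1,\alpha_2) = c(1,\alpha)$ for some $c > 0$. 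Because $\alpha$ is irrational, the frequencies $\alpha_1$ and $\alpha_2$ are rationally independent.

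Second, $\Phi$ then intertwines the flow $\varphi^{\bar Y}_t$ with the translation flow $T_t$ of Fact \ref{fkt:flow}. Since $\Phi$ is a homeomorphism, push-forward sets up a bijection between $T_t$-invariant Borel measures on $\mathbb{R}^2/(2\pi\mathbb{Z})^2$ and $\varphi^{\bar Y}_t$-invariant Borel measures on $\mathbb{T}^2$. By Fact \ref{fkt:flow}, the former are exactly the positive scalar multiples of the Lebesgue measure $\mu$. Pushing this forward via $\Phi$, and using that the pulled-back metric on $\mathbb{R}^2/(2\pi\mathbb{Z})^2$ is a constant multiple of the standard flat metric, the measure $\Phi_\# \mu$ is a positive scalar multiple of $\operatorname{vol}_{\mathbb{T}^2}$, which gives the claim.

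The argument is essentially a translation of Fact \ref{fkt:flow} into the geometric language of the corollary, so there is no genuine obstacle; the only point requiring some care is the bookkeeping of the correspondence between the Riemannian area measure and Lebesgue measure under the scaled covering.
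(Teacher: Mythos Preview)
Your proposal is correct and follows essentially the same route as the paper: both arguments descend the covering $F|_{\{1\}\times\{\pi/4\}\times\mathbb{R}^2}$ to a homothety $\mathbb{R}^2/(2\pi\mathbb{Z})^2\to\mathbb{T}^2$, observe that $\bar{Y}$ corresponds to a constant irrational direction (the paper writes the explicit normalization $a=\tfrac{1}{\sqrt{2}}\sqrt{1+\alpha^2}$ and takes $(\alpha_1,\alpha_2)=(1/a,\alpha/a)$), and then transfer the unique ergodicity statement of Fact~\ref{fkt:flow} through this conjugacy, using the homothety to identify the pushed-forward Lebesgue measure with $\on{vol}_{\mathbb{T}^2}$ up to scale.
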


\begin{proof}
The norm of the vectorfield $Y=\partial_\varphi + \alpha\, \partial_\theta$ is constant on $\mathbb{T}^2$, and we denote it by $a= |Y\vert_{\mathbb{T}^2}|= \frac{1}{\sqrt{2}}\sqrt{1+\alpha^2}$. Now, the covering map $\rho: \R^2\to\mathbb{T}^2 $, $\rho(x_1,x_2) = F(1,\frac{\pi}{4},x_1,x_2)$ induces a diffeomorphism $\tilde{\rho}: \R^2/(2 \pi \mathbb{Z})^2 \to\mathbb{T}^2$ conjugating the irrational linear flow $T_t$ from Fact \ref{fkt:flow} with $\alpha_1=\frac{1}{a}$ and $\alpha_2=\frac{\alpha}{a}$ to the flow $\varphi^{\bar{Y}}_t$. So, by Fact \ref{fkt:flow}, the push-forward $\tilde{\rho}_\#\mu$ of the Lebesgue measure $\mu$ on $\R^2/(2 \pi \mathbb{Z})^2$ is the -- up to scale -- unique $\varphi^{\bar{Y}}_t$--invariant Borel measure on $\mathbb{T}^2$. On the other hand, $\tilde{\rho}_\#\mu$ equals $\on{vol}_{\mathbb{T}^2}$ up to a factor since $\tilde{\rho}$ is a homothety. 
\end{proof}

We first give a short outline of the proof of Proposition \ref{prop:unique}. In Step 1 we calculate that $V_0$ is indeed stationary, see also Remark \ref{rmk:invarg}. In Step 2 and 3 we consider an arbitrary nonzero, stationary $(m+1)$-varifold $V$ in $\tilde{M}$. In Step 2 we show that its support is contained in the set $\F\subset G_{m+1}(\tilde{M})$. This relies on the convexity properties of the spheres $S^3(\rho) \subset B$, cf. Section \ref{metrik3}. In the last step, we use the Constancy Theorem \cite[41.2(3)]{Simon} to prove that the weight measure $\mu_V$ of $V$ has a product structure. Then the unique ergodicity of the flow $\varphi_t^{\bar{Y}}$ can be used to show that $\mu_V$ is indeed proportional to the product measure $\on{vol}_M\otimes \on{vol}_{\mathbb{T}^2}$. This proves that $V=\lambda V_0$ for some $\lambda> 0$.
\newpage
\begin{proof}[Proof of Proposition \ref{prop:unique}:]
\hfill

{\bfseries Step 1:} Here we prove that $V_0$ is stationary.

\noindent{}We recall that the vectorfield $\bar{Y}$ (see Corollary \ref{cor:unique} for the definition of $\bar{Y}$) is parallel, and spans $T_q\mathcal{F}$ at every point $q$ of $\mathbb{T}^2$. We decompose any vectorfield $X$ on $\tilde{M}$ as a sum $X(p,q)=X^1_q(p)+ X^2_p(q)$, where $X^1_q(p) \in T_pM$ and $X^2_p(q) \in T_qB$. So, by the special character of the Levi Civita connection of a Riemannian product, we obtain for every $(p,q)\in M \times \mathbb{T}^2$:
\begin{equation}
  \label{eq:div}
  \begin{array}{rl}
    \on{div}_{T_pM\times T_q\mathcal{F}}X =& \left.\on{div}_M (X^1_q)\right|_p + \left.g(\nabla_{\bar{Y}}X_p^2,\bar{Y})\right|_{q}\\
  =& \left.\on{div}_M (X^1_q)\right|_p + \left. \frac{d}{dt}\right|_{t=0}\,g(X_p^2,\bar{Y})\circ \varphi^{\bar{Y}}_t(q),
  \end{array}
\end{equation}
where $\varphi^{\bar{Y}}_t$ denotes the flow of $\bar{Y}$. Now the Gauss Theorem and the invariance of the volume of the flat torus under $\varphi^{\bar{Y}}_t$, cf. Corollary \ref{cor:unique}, imply that
\begin{align*}
  \delta V_0 (X) &= \int\limits_{M\times \mathbb{T}^2}\on{div}_{T_pM\times T_q\mathcal{F}}X \,\, d\mu_{V_0}(p,q)\\
  &= \int\limits_{\mathbb{T}^2} \int\limits_{M} \left.\on{div}_M (X^1_q)\right|_p \,\on{dvol}_{M}(p) \on{dvol}_{\mathbb{T}^2}(q)\\
  &\qquad\quad + \int\limits_{M}\int\limits_{\mathbb{T}^2}   \left.  \frac{d}{dt}\right|_{t=0}\,g(X_p^2,\bar{Y})\circ \varphi^{\bar{Y}}_t(q) \, \on{dvol}_{\mathbb{T}^2}(q)\on{dvol}_{M}(p)\\   
 &= 0+ \int\limits_{M} \left. \frac{d}{dt}\right|_{t=0}\,\left(\int_{\mathbb{T}^2}   \left. g(X_p^2,\bar{Y})\right|_{q}  d(\varphi^{\bar{Y}}_{t})_\#\on{vol}_{\mathbb{T}^2}(q)\right) \on{dvol}_{M}(p) \\
& =0.
\end{align*}
So $V_0$ is stationary.\vspace{1ex}

Now, we consider an arbitrary nonzero, stationary $(m+1)$--varifold $V$ in $\tilde{M}$. %\vspace{1ex}

{\bfseries Step 2:} First, we prove that the varifold $V$ has support in $\F$.

\noindent{}We consider $f:\tilde{M}\to \mathbb{R}_{\geq 0}$, $(p,q)\mapsto d^2(q)$, where $d(q)$ denotes the (euclidean) distance from $q\in B$ to $0 \in B$, cf. Section \ref{example}. Note that (G\ref{metrik2}) and (G\ref{metrik3}) imply the following:
If $(v,w) \in T_pM\times T_qB$ then $\nabla^2f((v,w),(v,w))>0$ except in the following two cases
\begin{itemize}
\item $w=0$, or
\item $q \in \mathbb{T}^2$ and $w \in T_q\mathcal{F}$.
\end{itemize}

Now suppose $V$ is a stationary $(m+1)$-varifold in $\tilde{M}$. We test $V$ against the vectorfield $X=\on{grad}f$. Then we have 
\begin{equation*}
    0 = \delta V(X)=\hspace{-2ex} \int\limits_{G_{m+1}(\tilde{M})} \hspace{-2ex}\on{div}_SX \,\,dV(S)  = \hspace{-2ex}\int\limits_{G_{m+1}(\tilde{M})}  \hspace{-2ex}\on{trace}_S(\nabla^2f) \,\,dV(S). 
  \end{equation*}
The preceding discussion shows that $\on{trace}_S(\nabla^2f)>0$ except if $S \in \F$. Hence $\on{spt}(V)\subset \F$.\vspace{1ex}

{\bfseries Step 3:} We show that $\mu_V$ equals $\on{vol}_{M\times \mathbb{T}^2}$ up to a constant. 

\noindent{}First, we prove that for any Borel set $A\subset B$ there exists $c_A>0$ such that the Borel measure $\mu^A$ on $M$ defined by $\mu^A(\,\cdot\,):= \mu_V(\,\cdot\,\times A)$ is given by $c_A \cdot \on{vol}_M$.

Note that $\mu^A$ can be considered as an $m$-varifold on the $m$-dimensional manifold $M$. We will show that $\mu^A$ is a stationary $m$-varifold, and then the Constancy Theorem \cite[41.2(3)]{Simon} implies that $\mu^A$ is a multiple of the Riemannian volume measure $\on{vol}_M$ as claimed. Denote the measure $(\pi_2)_\#\mu_V$ on $B$ by $\mu_{V,2}$, where $\pi_2:M \times B \to B$ denotes the usual projection to the second component. We choose a sequence $f_n \in C^\infty_c(B)$ converging to the indicator function $\chi_A$ in $L^1(\mu_{V,2})$. This implies that $f_n \circ \pi_2$ converges to $\chi_{M\times A}$ in $L^1(\mu_V)$.  Denoting the projection $M\times B \to M$ by $\pi_1$ we calculate for every vectorfield $X$ on $M$
\begin{align*}
  \int\limits_M \on{div}_M X \,\,d \mu^A=& \int\limits_{M\times A}\on{div}_MX\circ \pi_1 \,\, d\mu_V \\
  = &\lim_{n \to \infty} \int\limits_{M \times B}(f_n\circ\pi_2) \cdot (\on{div}_MX \circ \pi_1) \,\,d\mu_V \\
  = &\lim_{n \to \infty} \int\limits_{M \times B}\on{div}_{T_pM\times T_q\mathcal{F}} ((f_n \circ \pi_2)\cdot (X \circ \pi_1)) \,\, d\mu_V(p,q), 
\end{align*}
since it follows from equation \eqref{eq:div} that 
\[\on{div}_{T_pM\times T_q\mathcal{F}} ((f_n \circ \pi_2)\cdot (X \circ \pi_1))= f_n(q) \cdot \on{div}_MX\vert_p.\] 
 Since $V$ is stationary, we know from Step 2 that $\on{spt}(V)\subset \F$. Hence
\[\delta V((f_n\circ\pi_2)\cdot(X\circ\pi_1))=\hspace{-1ex}\int\limits_{M\times \mathbb{T}^2} \on{div}_{T_pM\times T_q\mathcal{F}}((f_n\circ\pi_2)\cdot(X\circ\pi_1))\,\, d\mu_V(p,q)=0\]
for all $n \in \mathbb{N}$. Thus $\int_M \on{div}_M X \,\, d \mu^A =0$ for every vectorfield $X$ on M, i.e. the $m$-varifold defined by $\mu^A$ is stationary, and hence a multiple of $\on{vol}_M$, see \cite[41.2(3)]{Simon}. Using the abbreviation $\mu_{V,2}=(\pi_2)_\#\mu_V$ introduced above, the constant $c_A$ can be calculated as follows
\begin{equation*}
  c_A=\frac{1}{\on{vol}_M(M)}\mu^A(M)=\frac{1}{\on{vol}_M(M)} \mu_{V,2}(A).
\end{equation*}
Hence, $\mu_{V}$ is given as a product of $\on{vol}_M$ and $\mu_{V,2}$. Next, we prove that -- up to scale -- $\mu_{V,2}$ coincides with the Riemannian area $\on{vol}_{\mathbb{T}^2}$.

The idea is to show invariance of $\mu_{V,2}$ under the flow $\varphi^{\bar{Y}}_t$ of $\bar{Y}$. Then the unique ergodicity of $\varphi^{\bar{Y}}_t$ implies that $\mu_{V,2}$ is a multiple of $\on{vol}_{\mathbb{T}^2}$, cf. Corollary \ref{cor:unique}.

 We consider $f\in C^1(B)$ and $\tilde{X}= (f\bar{Y})\circ \pi_2$. Since $\tilde{X}$ is defined in a neighborhood of $\operatorname{spt}(\mu_V)$ and $V$ is stationary we have
  \begin{align*}
    0= &\delta V(\tilde{X})= \int \on{div}_S\tilde{X}\,\, dV(S).
\end{align*}
Since $\on{spt}(V)\subset \F$, equation \eqref{eq:div} implies 
\begin{align*}
   0 =&  \int\limits_{M\times \mathbb{T}^2} \on{div}_{T_pM\times T_q\mathcal{F}}\tilde{X} \,\,d\mu_V(p,q) = \int\limits_{M\times \mathbb{T}^2} g(\nabla_{\bar{Y}}f\bar{Y},\bar{Y})\circ \pi_2\,\,d\mu_V \\
   = &\int\limits_{M\times \mathbb{T}^2} \Bigl( df(\bar{Y})  + f g(\nabla_{\bar{Y}}\bar{Y},\bar{Y})\Bigr)\circ \pi_2\,\, d\mu_V\\ 
    =&  \int\limits_{\mathbb{T}^2}  df(\bar{Y})\,\, d\mu_{V,2}.
  \end{align*}
Since every function $f \in C^1(\mathbb{T}^2)$ can be extended to a $C^1$-function on $B$ we conclude that 
\[\int\limits_{\mathbb{T}^2} df(\bar{Y})\,\, d\mu_{V,2}=0\]
for all $f \in C^1(\mathbb{T}^2)$. This implies that $\mu_{V,2}$ is $\varphi^{\bar{Y}}$-invariant. For convenience, we include the simple proof. Since $\bigl(d(f\circ\varphi^{\bar{Y}}_t)\bigr)(\bar{Y}_p)= \left.\frac{d}{dt}\right|_t \,f\circ \varphi_t^{\bar{Y}}(p)$, we have for all $t >0$
\begin{align*}
  0 =&  \int\limits_{0}^{t} \int\limits_{\mathbb{T}^2}d(f\circ \varphi^{\bar{Y}}_{\tau})(\bar{Y})\,\,  d\mu_{V,2} \,\, d\tau\\
  =&\int\limits_{\mathbb{T}^2}f\,\,d(\varphi^{\bar{Y}}_{t\#}(\mu_{V,2})) - \int\limits_{\mathbb{T}^2}f\,\,d\mu_{V,2}.
\end{align*}
This together with the Borel regularity of $\mu_{V,2}$ implies the $\varphi_t^{\bar{Y}}$-invariance of $\mu_{V,2}$. Now the unique ergodicity of $\varphi^{\bar{Y}}_t$ implies our claim, cf. Corollary \ref{cor:unique}.

This completes the proof of Step 3. Together, Step 2 und Step 3 prove the claimed uniqueness of $V_0$.
\end{proof}

\begin{remark}\label{rmk:invarg}
  Actually, the calculation in Step 1 can be replaced by the following more involved argument showing that $V_0$ is stationary. Since $\tilde{M}$ does not satisfy an isoperimetric inequality for $(m+1)$-varifolds, B.~White's Theorem 2.3 from \cite{White} implies that $\tilde{M}$ contains a nonzero, stationary $(m+1)$-varifold $V$. But now the preceding two steps show that this $V$ is a nonzero multiple of $V_0$. Hence $V_0$ is stationary.
\end{remark}

\section{Appendix}
\label{sec:appendix}

\begin{lemma} \label{lem:app1}
  There is a function $R \in C^\infty\left(\left]0,2\right[\times\left]0,\nicefrac{\pi}{2}\right[,\mathbb{R}^+\right)$ that fulfills conditions (R\ref{R1})-(R\ref{R3}).
\end{lemma}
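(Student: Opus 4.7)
The plan is to construct $R$ by first designing its $\rho$-derivative $s := \partial_\rho R$ and then integrating; this has the advantage of decoupling the positivity condition (R\ref{R2}) from the matching condition (R\ref{R1}). Write $s(\rho,\psi) := 2\rho(\cos^2\psi + \alpha^2\sin^2\psi)(1 + h(\rho,\psi))$, and look for a smooth function $h$ on $]0,2[\times]0,\nicefrac{\pi}{2}[$ satisfying: (a) $h \equiv 0$ outside the box $[\nicefrac{1}{2},\nicefrac{3}{2}]\times[\nicefrac{\pi}{8},\nicefrac{3\pi}{8}]$; (b) $h(1,\nicefrac{\pi}{4}) = -1$ and $h > -1$ elsewhere; (c) $\int_{1/2}^{3/2} t\cdot h(t,\psi)\,dt = 0$ for every $\psi\in]0,\nicefrac{\pi}{2}[$. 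With such $h$, define
\[
R(\rho,\psi) := \int_{1/4}^{\rho} s(t,\psi)\,dt + \tfrac{1}{16}(\cos^2\psi + \alpha^2\sin^2\psi).
\]
Then $R \equiv R_0$ on $]0,\nicefrac{1}{2}]\times ]0,\nicefrac{\pi}{2}[$ by construction, and (c) ensures $R \equiv R_0$ on $[\nicefrac{3}{2},2[\times ]0,\nicefrac{\pi}{2}[$ as well, so (R\ref{R1}) holds; (b) yields (R\ref{R2}); and $\partial_\rho R(1,\nicefrac{\pi}{4}) = s(1,\nicefrac{\pi}{4}) = 0$ takes care of one half of (R\ref{R3}).

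The remaining requirement for (R\ref{R3}) is $\partial_\psi R(1,\nicefrac{\pi}{4}) = 0$. Differentiating under the integral sign and using the boundary data of $R_0$ at $\rho = \nicefrac{1}{4}$, this condition becomes the single scalar linear constraint
\[
2(\alpha^2-1)\int_{\nicefrac{1}{2}}^{1} t\,h(t,\nicefrac{\pi}{4})\,dt + (1+\alpha^2)\int_{\nicefrac{1}{2}}^{1} t\,\partial_\psi h(t,\nicefrac{\pi}{4})\,dt = -(\alpha^2-1).
\]
I construct $h$ in two stages. First, pick a convenient $h_0$ satisfying (a), (b), (c) via the product ansatz $h_0(\rho,\psi) = -\varphi_1(\rho)\varphi_2(\psi) + c\,\psi_1(\rho)\varphi_2(\psi)$, where $\varphi_1,\varphi_2,\psi_1 \in C^\infty_c$ are bumps with $\varphi_1(1) = \varphi_2(\nicefrac{\pi}{4}) = 1$, $\psi_1(1) = 0$, $\varphi_1<1$ off $1$, $\varphi_2<1$ off $\nicefrac{\pi}{4}$, and $c$ is the scalar fixed by (c). Generically the displayed scalar constraint fails, so I add a correction $h_1 := \lambda\,\eta_1(\rho)\eta_2(\psi)$ whose $\rho$-factor satisfies $\int_{\nicefrac{1}{2}}^{\nicefrac{3}{2}} t\,\eta_1(t)\,dt = 0$ (so (c) is preserved) and whose $\psi$-factor contributes nontrivially to the displayed functional. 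The scalar constraint is then a linear equation in $\lambda$, solvable exactly.

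The main technical obstacle is to guarantee that $h := h_0 + h_1$ continues to satisfy (b) after the correction is added. This is an open condition, so if $h_0$ can be arranged so that $h_0 > -1 + \delta$ off a small neighborhood of $(1,\nicefrac{\pi}{4})$, then any $h_1$ with $\|h_1\|_\infty < \delta$ preserves (b). By choosing $\eta_1, \eta_2$ so that the linear functional evaluated on $\eta_1\eta_2$ is comparable to $\|\eta_1\eta_2\|_\infty$ (for instance by concentrating $\eta_2'$ near $\nicefrac{\pi}{4}$), the required $\lambda$ is small and the sup norm of $h_1$ stays below $\delta$. The resulting $R$ is smooth on $]0,2[\times]0,\nicefrac{\pi}{2}[$, satisfies (R\ref{R1})-(R\ref{R3}), and is strictly positive (since $s \geq 0$ and $R(\nicefrac{1}{4},\psi) > 0$), hence takes values in $\mathbb{R}^+$.
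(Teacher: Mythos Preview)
Your approach is genuinely different from the paper's. The paper writes $R=(1-\beta)l+\beta k$ as a bump-weighted interpolation between the target $l(\rho,\psi)=\rho^2(\cos^2\psi+\alpha^2\sin^2\psi)$ and an auxiliary $k$ that already satisfies (R\ref{R2}), (R\ref{R3}) and $k(1,\tfrac{\pi}{4})=l(1,\tfrac{\pi}{4})$; the key observation is that $\partial_\rho(k-l)(1,\tfrac{\pi}{4})<0$, so $k-l$ changes sign across $\rho=1$, and a bump $\beta$ whose $\rho$-derivative matches that sign makes (R\ref{R2}) a one-line check. Your route---prescribe $\partial_\rho R$ and integrate---decouples (R\ref{R2}) nicely, at the cost of having to enforce the matching and the $\partial_\psi$-condition as integral constraints afterwards.

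There is, however, a gap. Condition (R\ref{R1}) requires $R=R_0$ not only outside the box but also at the single interior point $(1,\tfrac{\pi}{4})$. With your formula,
\[
R\bigl(1,\tfrac{\pi}{4}\bigr)=R_0\bigl(1,\tfrac{\pi}{4}\bigr)+(1+\alpha^2)\int_{1/2}^{1} t\,h\bigl(t,\tfrac{\pi}{4}\bigr)\,dt,
\]
so you need the extra scalar constraint $\int_{1/2}^{1} t\,h(t,\tfrac{\pi}{4})\,dt=0$. Your condition (c) only forces the integral over $[\tfrac12,\tfrac32]$ to vanish, not this half-integral, and neither $h_0$ nor $h_1$ is built to satisfy it. Since you have already spent your single free parameter $\lambda$ on the $\partial_\psi R(1,\tfrac{\pi}{4})=0$ equation, the argument as written does not verify (R\ref{R1}). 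The repair is straightforward---introduce a second correction term with its own scalar parameter, or impose from the outset that $t\mapsto t\,h(t,\tfrac{\pi}{4})$ has vanishing integral over $[\tfrac12,1]$ (which is compatible with $h(1,\tfrac{\pi}{4})=-1$)---but it needs to be done explicitly. A smaller point: the claim that your ``convenient'' $h_0$ satisfies (b) globally hinges on $-\varphi_1+c\psi_1>-1$ for $\rho\neq 1$, which requires controlling the size of $c$; this is achievable but deserves a sentence.
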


\begin{proof}
  It is easy to find a function $k \in C^\infty(\mathbb{R}^2,\mathbb{R})$ that meets conditions (R\ref{R2}) and (R\ref{R3}), and the following weakening of condition (R\ref{R1}) 
  \begin{enumerate}[(R1')]
  \item \label{R1'}$k\left(1,\nicefrac{\pi}{4}\right)= \cos^2(\nicefrac{\pi}{4})+ \alpha^2 \sin^2 (\nicefrac{\pi}{4})$.
  \end{enumerate}
For example $k(\rho, \psi)= (\psi- \nicefrac{\pi}{4})^2 \rho + (\rho-1)^3 + \cos^2(\nicefrac{\pi}{4})+ \alpha^2 \sin^2 (\nicefrac{\pi}{4})$ has these properties, but we do not need the explicit formula. In addition, we define the function $l\in C^\infty(\mathbb{R}^2,\mathbb{R}^+)$ by $l(\rho,\psi)= \rho^2(\cos^2\psi + \alpha^2 \sin^2 \psi)$. Then
\begin{equation*}
  (k-l)\left(1,\nicefrac{\pi}{4}\right)=0 \quad \text{ and }\quad \frac{\partial}{\partial \rho}(k-l)\left(1,\nicefrac{\pi}{4}\right)= - 2\left(\cos^2\left(\nicefrac{\pi}{4}\right)+ \alpha^2 \sin^2 \left(\nicefrac{\pi}{4}\right)\right) < 0.
\end{equation*}

Therefore we can find $\nicefrac{1}{2} < \rho_1 < \rho_2<1<\rho_3<\rho_4<\nicefrac{3}{2}$ and $\nicefrac{\pi}{8}< \psi_1 < \nicefrac{\pi}{4} <\psi_2 <\nicefrac{3 \pi}{8}$ such that for any $\psi \in [\psi_1,\psi_2]$
\begin{equation}
  \label{eq:gr1}
    \begin{array}{ll}
      (k-l)(\rho,\psi) >0 & \text{ if } \rho \in [\rho_1,\rho_2],\\
      (k-l)(\rho,\psi) <0 & \text{ if } \rho \in [\rho_3,\rho_4].
    \end{array}
\end{equation}
Now choose a bump function $\beta \in C^\infty(\mathbb{R}^2,[0,1])$ with support in $[\rho_1,\rho_4]\times [\psi_1,\psi_2]$, that is constantly equal to $1$ in a neighbourhood of $\left(1,\nicefrac{\pi}{4}\right)$, and has the following property for any $\psi \in [\psi_1,\psi_2]$
\begin{equation}
  \label{eq:gr2}
   \frac{\partial}{\partial \rho} \, \beta (\rho,\psi) \left\{
       \begin{array}{ll}
         \geq 0 & \text{ for } \rho \in [\rho_1,\rho_2]\\
         = 0 & \text{ for } \rho \in [\rho_2,\rho_3]\\
         \leq 0 & \text{ for } \rho \in [\rho_3,\rho_4]\\
       \end{array}\right..
\end{equation}
Since $(k-l)(1,\nicefrac{\pi}{4})=0$ we can choose the parameters $\rho_1$, $\rho_2$, $\rho_3$, $\rho_4$, $\psi_1$ and $\psi_2$ in such a way that the function
\[R:= (1-\beta)l+ \beta k=l+ \beta(k-l)\]
is positive on the open set $]0,2[ \times ]0,\nicefrac{\pi}{2}[$. Obviously the restriction of $R$ to $]0,2[ \times ]0,\nicefrac{\pi}{2}[$ meets conditions (R\ref{R1}) and (R\ref{R3}). To finish the proof we check the monotonicity condition (R\ref{R2}):
\begin{eqnarray*}
  \frac{\partial}{\partial\rho} R &= & (1-\beta)\frac{\partial}{\partial\rho} l  +\beta\frac{\partial}{\partial\rho}k + (k-l)\frac{\partial}{\partial\rho}\beta,
\end{eqnarray*}
where the sum of the first two terms is positive if $(\rho,\psi)\not= (1,\nicefrac{\pi}{4})$ and the last term is nonnegative as \eqref{eq:gr1} and \eqref{eq:gr2} show.
\end{proof}

\thanks{
Acknowledgements: We thank Eugene Gutkin and St\'{e}phane Sabourau for useful comments. This work was partially supported by the DFG Collaborative Research Center SFB TR 71.}                   

\bibliographystyle{amsplain}

\bibliography{geodesic}

\end{document}